\numberwithin{equation}{section}
\newtheorem{thm}{Theorem}
\newtheorem{lem}[thm]{Lemma}
\newtheorem{sub-lem}[thm]{Sub-Lemma}
\newtheorem{rem}[thm]{Remark}
\newcommand{\bi}{{\boldsymbol i}}
\newcommand{\cA}{{\mathcal A}}
\newcommand{\cC}{{\mathcal C}}
\newcommand{\cO}{{\mathcal O}}
\newcommand{\cV}{{\mathcal V}}
\newcommand\cW{{\mathcal W}}
\newcommand{\bC}{{\mathbb C}}
\newcommand{\bH}{{\mathbb H}}
\newcommand{\bN}{{\mathbb N}}
\newcommand{\bR}{{\mathbb R}}
\newcommand{\bU}{{\mathbb U}}
\newcommand{\bZ}{{\mathbb Z}}
\newcommand{\bc}{{\mathbbm c}}
\newcommand{\be}{{\mathbbm e}}
\newcommand{\bg}{{\mathbbm g}}
\newcommand{\vf}{\varphi}
\newcommand{\frkg}{\mathfrak g}
\newcommand{\pqnorm}[2][p,q,\ell]{\left\|#2 \right\|_{#1}}
\newcommand\Id{{\mathbbm{1}}}
\newcommand\xu{\upsilon}
\newcommand\htop{h_{\text{\tiny top}}(\phi_1)}
\newcommand{\Const}{C_\#}
\newcommand{\const}{c_\#}
\newcommand\Cnd{C_1}
\newcommand\Cnc{C_6}
\begin{document}

\title[Errata]{Anosov Flows and Dynamical Zeta Functions\\ (Errata)}
\author{Paolo Giulietti}
\author{Carlangelo Liverani}
\author{Mark Pollicott}
\address{Paolo Giulietti\\
Dipartimento di Matematica\\
Universit\`a di Pisa\\
Largo Bruno Pontecorvo 5, 56127 Pisa, Italy.}
\email{{\tt paolo.giulietti@unipi.it}}
\address{Carlangelo Liverani\\
Dipartimento di Matematica\\
II Universit\`{a} di Roma (Tor Vergata)\\
Via della Ricerca Scientifica, 00133 Roma, Italy.}
\email{{\tt liverani@mat.uniroma2.it}}
\address{Mark Pollicott\\
Department of Mathematics\\
Warwick University\\
Coventry, CV4 7AL, England}
\email{{\tt masdbl@warwick.ac.uk}}
\date{\today}
\begin{abstract}
This errata fixes a mistake in the part of \cite{GLP} which proves a spectral gap for contact Anosov flows with respect to the measure of maximal entropy  (\cite[Section 7]{GLP}).
However, the first part of \cite{GLP}, in which it is proved  that the Ruelle zeta function is meromorphic, is unaffected.
\end{abstract}
\thanks{ We thank Sebasti\'en Go\"uezel for pointing out the mistake and for very helpful discussions.}
\maketitle

\section{the mistake}

Equation \cite[Equation (7.14)]{GLP} is wrong, since it does not take into account the factor $e^{z\tau_W\circ H_{\beta,{\bi},W}}$ from \cite[Equation (7.10)]{GLP} and estimates incorrectly the norm of $\vf_{k,\beta,\bi}\circ H_{\beta, \bi,W}$. The correct version of  \cite[Equation (7.14)]{GLP} is (see \eqref{eq:g1} below):\\
 let $d_s$ be the dimension of the stable manifolds, then for each $\eta\in (0,1)$,\footnote{ See below for the definition of $\lambda_+,\lambda_-,\varpi'$.}
\begin{equation}\label{eq:g-est-correct}
\|e^{z\tau_W\circ H_{\beta,{\bi},W}}\hat g_{k, \beta, \bi, W}\|_{\Gamma^{d_s,\varpi'}_c(\widetilde W_{\beta,\bi})}\leq \Const (r^{-1} +|z|)\frac{(k r)^{n-1}e^{-a k r}}{(n-1)!}\|g\|_{\Gamma^{d_s,1+\eta}_c(W)}.
\end{equation}
Unfortunately, this weaker estimate does not suffice to carry out the proof of \cite[Proposition 7.5]{GLP} as presented in \cite{GLP} (i.e. using \cite[Equation (7.16)]{GLP}).

\section{Correction}
Nevertheless, \cite[Theorem 2.4]{GLP} holds under a stronger assumption (namely some homogeneity), as  we shall show here. In particular, it applies to small perturbations of constant curvature geodesic flows in any dimension. To simplify the argument we did not try to optimise the estimate of the size of the perturbation. Before stating the correct results we need to recall and introduce some notation.

Let $C_0,c_0>0$, and $\lambda_+(x,t),\lambda_-(x,t)>0$ be such that, for each $x\in M$ and $t>0$, $\sup_{v\in E^s(x)}\frac{\|D_x\phi_{-t}v\|}{\|v\|}\leq C_0 e^{\lambda_+ (x,t) }$ and $\inf_{v\in E^s(x)}\frac{\|D_x\phi_{-t}v\|}{\|v\|}\geq c_0e^{\lambda_- (x,t)}$. Also, let
\[
\begin{split}
&\lambda_+(t)=\sup_{x\in M}\lambda_+(x,t)\;; \quad \lambda_-(t)=\inf_{x\in M}\lambda_-(x,t).
\end{split}
\]
and,  for some $n_0$ large enough, $\lambda_+=\sup_{t\geq n_0}\frac{\lambda_+(t)}t$,  $\lambda_-=\inf_{t\geq n_0}\frac{\lambda_-(t)}t$.
In \cite{GLP} we use the notation $\hat\varpi=\frac{2\lambda_-}{\lambda_+}$, $\varpi'=\min\{1,\hat\varpi\}$.

Next, we introduce a parameter  $\vartheta>0$ which  measures the homogeneity. Let $J^s\phi_t$ be the stable Jacobian of the flow. Given $n_0\in\bN$, we assume, for all $t\geq n_0$, 
\begin{equation}\label{eq:homo}
\left[\sup_{x\in M}\ln J^s\phi_t(x)-\inf_{x\in M}\ln J^s\phi_t(x)\right]\leq \vartheta \lambda_-(t) d_s.
\end{equation}
With this notation we can state a correct version of \cite[Theorem 2.4]{GLP}.
\begin{thm}\label{thm:main2} For any $\cC^r, r > 2$, contact flow with
\[
\frac{\sqrt 5-1}{2}<\hat \varpi \;; \quad \vartheta<\frac{(\varpi')^2+\varpi'-1}{2d_s(1+\varpi')}  
\]
there exists $\tau_*>0$ such that the Ruelle zeta function is analytic in $\{z\in\bC\;:\;\Re(z)\geq \htop-\tau_*\}$ apart from a simple pole at $z=\htop$.\\
\end{thm}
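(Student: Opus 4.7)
The plan is to follow the architecture of \cite[Section 7]{GLP}, while replacing the broken step (the use of \cite[Eq. (7.14)]{GLP} in the proof of \cite[Proposition 7.5]{GLP}) by one that absorbs the extra factor $(r^{-1}+|z|)$ appearing in the corrected bound \eqref{eq:g-est-correct} by means of the homogeneity hypothesis \eqref{eq:homo}.

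First, I would retain the anisotropic Banach-space machinery of \cite{GLP}: the spaces $\Gamma^{d_s,\varpi'}_c$, the twisted transfer operator $\cL_z$ associated with the flow, and the reduction of Theorem~\ref{thm:main2} to the statement that $\cL_z$ has a spectral gap uniformly in a strip $\Re(z)\geq \htop-\tau_*$. This spectral gap is classically obtained by combining a Lasota--Yorke inequality on the strong norm with a Dolgopyat-type cancellation on the weak norm; only the former is invalidated by the mistake.

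Second, I would re-derive the strong-norm Lasota--Yorke inequality, starting from \eqref{eq:g-est-correct} and using \eqref{eq:homo} to make up for the $(r^{-1}+|z|)$ loss. Concretely, \eqref{eq:homo} implies that along any stable leaf the log-Jacobian $\ln J^s\phi_t$ oscillates by at most $\vartheta\lambda_-(t)d_s$, so the push-forward density splits as a leafwise-constant average times a fluctuation of size $e^{\vartheta\lambda_-(t)d_s}$. Isolating the average upgrades the naive strong-norm contraction rate $e^{-\varpi'\lambda_-(t)}$ to $e^{(-\varpi'+\vartheta d_s(1+\varpi'))\lambda_-(t)}$; combining this with the loss $(r^{-1}+|z|)$ and optimising the scale $r$ (taking $r^{-1}\sim|z|$ at high frequencies and $r$ a small constant otherwise), the per-iterate factor becomes strictly less than one precisely when
\[
\vartheta<\frac{(\varpi')^2+\varpi'-1}{2d_s(1+\varpi')}.
\]
This is the second hypothesis of the theorem, and its right-hand side is positive because $(\varpi')^2+\varpi'-1>0$ iff $\varpi'>(\sqrt 5-1)/2$, which follows from the first hypothesis $\hat\varpi>(\sqrt 5-1)/2$ together with $\varpi'=\min\{1,\hat\varpi\}$.

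Finally, with the restored Lasota--Yorke inequality in hand, the rest of \cite[Section 7]{GLP}---the Dolgopyat cancellation on the weak norm, the deduction of a spectral gap for $\cL_z$, and its translation into meromorphy of the Ruelle zeta function with a single simple pole at $\htop$ via the Laplace-transform/trace argument---should go through unchanged, since none of these steps invokes the erroneous estimate. I expect the main obstacle to be the second step: propagating the leafwise average/fluctuation decomposition through the multi-scale construction that yields \eqref{eq:g-est-correct} without disturbing the stable/unstable cone structure underlying the anisotropic norms. Any slack in that bookkeeping would force a strictly stronger hypothesis on $\vartheta$ than the one stated.
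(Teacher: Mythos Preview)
Your proposal misidentifies which half of the argument is broken, and this leads to the wrong mechanism for the fix. In \cite[Proposition 7.5]{GLP} the \emph{first} inequality (the Lasota--Yorke type bound) is already correct; what fails is the \emph{second} inequality, namely the Dolgopyat-type oscillatory cancellation that controls $\pqnorm[1+\eta]{\widehat R_n(z)^2 h}^s$. Equation \cite[(7.14)]{GLP} enters precisely in estimating the summed test functions $\sum_W \hat g_{k,\beta,\bi,W}$ after projection by holonomy, and the extra factor $(r^{-1}+|z|)$ in \eqref{eq:g-est-correct} spoils the $L^\infty$ control used in \cite[(7.16)]{GLP}. So the sentence ``the Dolgopyat cancellation on the weak norm \ldots\ should go through unchanged, since none of these steps invokes the erroneous estimate'' is exactly backwards.

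Correspondingly, the repair is not a Jacobian average/fluctuation splitting in the Lasota--Yorke step. The paper's route is: (i) replace the pointwise $L^\infty$ bound on $\mathfrak g^*_{k,\beta,\bi}=\sum_W \bg_{k,\beta,\bi,W}$ by an $L^2$/Chebyshev bound on the set $\Omega$ where it is large, using the oscillatory integral (the analogue of \cite[Lemma 7.10]{GLP}) to estimate $\int \mathfrak G^*_{k,\beta,\bi,B}$; (ii) use the homogeneity hypothesis \eqref{eq:homo} twice --- first to sharpen the counting ratio in \cite[Lemma 7.9]{GLP} to $|\phi_t(D^u_r)|/|\phi_t(D^u_\varrho)|\geq \Const (r\varrho^{\vartheta-1})^{d_s}$, and second to show that the bad set $\Omega$ keeps relative measure $\leq \Const r^{\varsigma}$ under backward iteration; (iii) split the pieces of $\phi_{-k'}W_{\beta,\bi}$ into those meeting $\Omega$ and those not, and estimate each part separately. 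The scale linkage is $|b|=r^{-2-\varpi'+\varsigma}$ and $\varrho\sim r^{(1+\varsigma/d_s)/(1-\vartheta)}$, not $r^{-1}\sim|z|$; the threshold $\vartheta<\frac{(\varpi')^2+\varpi'-1}{2d_s(1+\varpi')}$ comes out of balancing $\rho^{\vartheta d_s}b^{\varpi'}\varrho^{\varpi'}r^{1+\varsigma}$ in this Chebyshev/counting step, not from a contraction rate in a strong-norm inequality. Your outlined mechanism does not produce that exponent, and the ``main obstacle'' you anticipate is in the wrong place.
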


The rest of this note contains the proof of Theorem \ref{thm:main2}.

First, it suffices to prove \cite[Proposition 7.5]{GLP} under the hypotheses of Theorem \ref{thm:main2}, since the derivation of \cite[Theorem 2.4]{GLP} from  \cite[Proposition 7.5]{GLP},  holds unchanged.

By \cite[Remark 7.1]{GLP}  we can restrict the discussion to $d_s=(d-1)/2$ forms. Since the first inequality in \cite[Proposition 7.5]{GLP} is correct, we need only prove the second. 
Also \cite[equation (7.6)]{GLP} is correct, hence it suffices to estimate the $\|\cdot\|_{1+\eta}$ norm of some power of ${\widehat R}_n(z)$. Indeed, by  \cite[equation (7.7) and the previous displayed equation]{GLP}, for each $z=a+ib$, $a\geq \sigma_{d_s}$, 
and $\eta\in [0,1]$,
\begin{equation}\label{eq:reg-up0}
\begin{split}
\pqnorm[\eta]{{\widehat R}_n(z)^3 h}^s\leq &\;\frac{C_\eta }{(a-\htop+\lambda\eta)^{n}(a-\htop)^{2n}}  \pqnorm[\eta]{h}^s\\
&+\frac{C_\eta}{(a-\htop)^{n}} \pqnorm[1+\eta]{{\widehat R}_{n}(z)^2h}^s.
\end{split}
\end{equation}
We will use the above equation instead of \cite[equation (7.7)]{GLP}.
\begin{rem}\label{rem:semplify_form}
The estimate \eqref{eq:reg-up0} can be restricted to forms proportional to the volume on the stable manifold.
More precisely, given a stable  manifold $W$, if $\{v^s_i\}_{i=1}^{d_s}$, $\{v^u_i\}_{i=1}^{d_s}$ are a base for the tangent space of $W$ and the unstable foliation, respectively, and $\{dx_i\}_{i=1}^{2ds+1}=\{dx^s_i, dx^u_i\}_{i=1}^{d_s}\cup\{dx_0\}$ the dual base ($dx_0$ being the flow direction), then for all $g$ not proportional to $w^s:=dx^s_1\wedge\cdots\wedge dx^s_{d_s}$ we have
\[
\left|\int_W\langle g , {\widehat R}_n(z)^3 h\rangle\right|\leq \;\frac{C_\eta }{(a-\htop+\lambda\eta)^{3n}}  \pqnorm[\eta]{h}^s,
\]
which yields already the required estimate.
Hence, from now on, by $\Gamma^{d_s,\alpha}_c(\widetilde W_+)$, defined in \cite[Section 3.2]{GLP}, we mean the subset of forms proportional to $w^s$.\\
\end{rem}
\begin{rem} \label{rem:unstable}
If $v\in \cV^u$,\footnote{ These are the unstable vector fields, see \cite[Definition 7.2]{GLP} for a precise definition.}   then the Lie derivative  $L_v$ acting on the above $d_s$ forms is well defined even for H\"older vector fields. Indeed, the pushforward by the flow generated by $v$ yields a quantity proportional to the Jacobian of the unstable holonomy which is well defined, together with its derivative along the unstable direction.
\end{rem}

Next, we must  estimate the right hand side of  \eqref{eq:reg-up0}: let $g\in\hat\Gamma^{d_s,1+\eta}_c$ and $h\in\Omega_{0,1}^{d_s}$,
\begin{equation}\label{eq:dolgo-step1}
\begin{split}
&\int_{W_{\alpha,G}} \langle g, {\widehat R}_n(z)^2 h\rangle =\sum_{k, \beta, \bi}\;\sum_{W\in \cW_{k, \beta,\bi}}\int_{\widetilde W}\langle\hat g_{k, \beta,\bi}, {\widehat R}_n(z) h\rangle\\
&\hat g_{k, \beta,\bi}=\varphi_{k, \beta,\bi}\frac{(k r+\tau_W)^{n-1}J_W\phi_{k r}\circ\phi_{\tau_W}}{e^{z(k r+\tau_W)}(n-1)!}  *\phi_{k r+\tau_W}^**g\\
&\varphi_{k, \beta,\bi}(x)=\psi_\beta(x)\Phi_{r,\bi}(\Theta_\beta(x))p(r^{-1}\tau_W(x))\|V(x)\|^{-1}.
\end{split}
\end{equation}
Recall that $\tau_W:\widetilde W\doteq \cup_{t\in[-2r,2r]}\phi_t W\to \bR$ is defined by $\phi_{-\tau_W(x)}(x)\in W$.\footnote{ The point of the above equation is that it allows one to go from an integral over a strong stable manifold to integrals over weak stable manifolds. See \cite[Section 3]{GLP} for the necessary definitions. To compare the formulae below with \cite[ equations (7.9, 7.10, 7.11)]{GLP} recall that the flow is contact, hence $J\phi_t=1$, and $(-1)^{d_s(d-d_s)}=(-1)^{d_s(d_s+1)}=1$. Also, recall that $\sum_{k\in\bZ}p(k+t)=1$ and $\operatorname{supp}(p)\subset (-1,1)$. Finally,  the minus sign in front of $z$ in \cite[equation (7.10)]{GLP} is a misprint and, just before  \cite[Equation (7.9)]{GLP}, the definition of $\tau_W$ has a minus sign missing due to a misprint.}

Next, as in \cite[ Equation (7.13)]{GLP}, we want to ``project"  $\hat g_{k, \beta, \bi}$ from $\widetilde W$ to some preferred manifold $\widetilde W_{k,\beta,\bi}$. To this end, we need a refinement of \cite[Lemma 7.3]{GLP}.

\begin{lem}\label{sublem:holo} For each $\alpha\in\cA$, $W,W'\in\Sigma_\alpha$ such that $H_{W,W'}(\widetilde W)\subset\widetilde W_+'$, $\vf\in \Gamma^{d_s, q}_c(\widetilde W)$, $q\in [0,1]$, supported in a ball of size $r$, there exists $\hat \vf\in \Gamma^{d_s,q\varpi'}_c(\widetilde W'_+)$, $\|\hat \vf\|_{\Gamma^{d_s,q\varpi'}_c(\widetilde W'_+)}\leq C_\#\|\vf\|_{\Gamma^{d_s,q}_c(\widetilde W)}$, such that for all $h\in\Omega_r^{d_s}$ we have
\[
\left|\int_{\widetilde W}\langle \vf, h\rangle-\int_{\widetilde W_+'}\langle \hat  \vf,h\rangle\right|\leq C_\# r d(W,W') \pqnorm[]{h}^u\|\vf\|_{\Gamma^{d_s,0}_c(\widetilde W)}.
\]
\end{lem}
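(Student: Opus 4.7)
The plan is to construct $\hat\vf$ as the holonomy push-forward of $\vf$. By Remark~\ref{rem:semplify_form}, write $\vf = f\,w^s$ with $f\colon\widetilde W\to\bR$ a $q$-Hölder scalar function, and set $\hat\vf := (f\circ H^{-1})\cdot(J\circ H^{-1})\,w^s$, where $H := H_{W,W'}\colon\widetilde W\to\widetilde W'_+$ is the holonomy along the unstable direction and $J$ its Jacobian with respect to the volume on the weak stable leaves. The holonomy $H$ decomposes as a smooth flow-direction factor and a strong-stable holonomy; in the contact Anosov setting the latter is $\varpi'$-Hölder with uniformly bounded Hölder constants (this is precisely the role of $\varpi' = \min\{1,2\lambda_-/\lambda_+\}$ already exploited in \cite{GLP}), and $J$ has the same Hölder regularity. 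Composing the $q$-Hölder $f$ with $H^{-1}$ therefore yields a $q\varpi'$-Hölder function, multiplying by $J\circ H^{-1}$ preserves this exponent, and the required bound $\|\hat\vf\|_{\Gamma^{d_s,q\varpi'}_c(\widetilde W'_+)} \leq \Const\,\|\vf\|_{\Gamma^{d_s,q}_c(\widetilde W)}$ follows.

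For the difference of integrals, by change of variables,
\[
\int_{\widetilde W'_+}\langle\hat\vf,h\rangle \;=\; \int_{\widetilde W}\langle\vf,H^*h\rangle,
\]
whence
\[
\int_{\widetilde W}\langle\vf,h\rangle - \int_{\widetilde W'_+}\langle\hat\vf,h\rangle \;=\; \int_{\widetilde W}\langle\vf,h-H^*h\rangle.
\]
The map $H$ displaces each $x\in\widetilde W$ by at most $\Const\,d(W,W')$, and this displacement is along the unstable direction. For $d_s$-forms proportional to $w^s$ the pairing $\langle w^s,h\rangle$ only probes $h$ in the stable-plus-flow directions, so the difference $h - H^*h$ picks up only the variation of $h$ in the unstable direction, which is controlled by $\pqnorm[]{h}^u$ (cf.\ Remark~\ref{rem:unstable}). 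This gives the pointwise estimate $|\langle\vf,h-H^*h\rangle(x)| \leq \Const\,d(W,W')\,\pqnorm[]{h}^u\,|f(x)|$. Integrating over $\widetilde W$ and using that $f$ is supported in a ball of radius $r$ — so that the flow-direction integration contributes a factor $r$, on top of the strong-stable $L^1$ integration already absorbed into $\|\vf\|_{\Gamma^{d_s,0}_c(\widetilde W)}$ — yields the claimed bound.

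The main obstacle I expect is the careful bookkeeping of the two components of $H$: the smooth flow-direction factor must not degrade the Hölder exponent of $\vf$ (so that the loss $q\to q\varpi'$ comes purely from the strong-stable piece), and the bound $|H(x)-x|\leq \Const\,d(W,W')$ must be verified with the displacement lying along the unstable direction, since any component along the stable or flow directions would spoil the cancellation in the pairing against $w^s$. One also has to check that $H^*h$ is meaningful despite $H$ being only Hölder; this works because only the unstable derivative of $h$ ever appears in the final estimate, exactly as allowed by Remark~\ref{rem:unstable}.
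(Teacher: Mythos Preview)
Your overall strategy matches the paper's: transport $\vf$ to $\widetilde W'_+$ by the unstable holonomy and bound the discrepancy using the unstable regularity of $h$. However, the central step of your error estimate does not go through as written, and there is a secondary difference in the definition of $\hat\vf$ that creates an extra burden.

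\textbf{The pointwise estimate is unjustified.} You claim $|\langle\vf,h-H^*h\rangle(x)| \leq \Const\,d(W,W')\,\pqnorm[]{h}^u\,|f(x)|$ pointwise. In the framework of \cite{GLP} the norm $\pqnorm[]{\cdot}^u$ is a \emph{functional} norm, defined through suprema of integrals of $L_v h$ (with $v\in\cV^u$) against test forms over admissible leaves; it does not control $h$ or its unstable derivative pointwise. What the paper actually does is introduce an interpolating flow $\overline\bH_s$ with generating unstable vector field $\bar w$, so that by Newton--Leibniz the error becomes
\[
d(W,W')\int_0^1 ds\int_{\widetilde W_s}\langle \psi_s, L_{\hat w}h\rangle,
\]
an honest integral of a test form against $L_{\hat w}h$ over the intermediate leaves $\widetilde W_s$. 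Only in this form can the definition of $\pqnorm[]{h}^u$ be invoked. Your formulation $\int_{\widetilde W}\langle\vf, h-H^*h\rangle$ is morally the $s$-integral of such terms, but you have not exhibited it as such, and the intermediate manifolds $\widetilde W_s$ (not $\widetilde W$) are where the pairing must live. This is the step your proposal is missing.

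\textbf{The Jacobian in $\hat\vf$.} You set $\hat\vf=(f\circ H^{-1})(J\circ H^{-1})w^s$, whereas the paper takes simply $\hat\vf=(\bar\vf\circ\bH_1)\,w^s$ \emph{without} the Jacobian, absorbing $J\overline\bH_s$ into the Lie-derivative term instead (this is exactly why the $s=0$ boundary term is Jacobian-free). Your choice forces you to verify that the holonomy Jacobian is itself $\varpi'$-H\"older along $\widetilde W'_+$, an additional regularity claim you assert but do not justify; the paper's choice sidesteps this entirely.
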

\begin{proof}
Working in appropriate coordinates we can write $\widetilde W'_+$ as $\{(\xi,0,\tau)\}_{(\xi,\tau)\in\bR^{d_s+1}}$, and  $\widetilde W$ as  $\{(\xi,\be,\tau)\}_{(\xi,\tau)\in\bR^{d_s+1}}$ for $\be=d(W,W')e_{1}$.

We can describe the unstable foliation by $\bU(\xi,\eta,\tau)=(U(\xi,\eta), \eta, \Upsilon(\xi,\eta)+ \tau)$, $\bU(\xi,0,\tau)=(\xi,0,\tau)$.
Then the intersection between $\widetilde W_\xu=\{(\xi,\xu \be,\tau)\}_{(\xi,\tau)\in\bR^{d_s+1}}$ and the fiber $\bU(\xi,\cdot,\tau)$ gives the holonomy
$\bH_\xu(\xi,\tau)=(U(\xi,\xu \be),\xu \be, \Upsilon (\xi, \xu\be)+\tau)$. As mentioned in Remark \ref{rem:semplify_form}, $\vf=\bar\vf \, d\xi_1\wedge\cdots \wedge d\xi_{d_s}$, hence we can assume w.l.o.g. that $h=\bar h\, d\xi_1\wedge\cdots \wedge d\xi_{d_s}$, for some function $\bar h$.
It is then natural to define, for each $\xi\in\bR^{d_s}$, $\tau\in\bR$ and $\eta\in\bR^{d_s}$,
\[
\overline\bH_s(\xi,\eta,\tau)=\bU(\bU^{-1}(\xi,\eta, \tau)+(0,s\be,0)).
\]
Since, $\overline\bH_0(\xi,\eta, \tau)=(\xi,\eta, \tau)$ and $\overline\bH_r\circ \overline\bH_s=\overline\bH_{r+s}$, we have just defined a flow, let $\bar w=(w,\be,\sigma)$ be the associated vector field. Note that, by construction, $\bar w$ is a vector field in the unstable direction.  By the regularity of the holonomy (see the discussion at the beginning of \cite[Appendix E]{GLP}), we have $\|\bar w\|\leq \Const d(W,W')$. Hence, $\hat w=d(W,W')^{-1}\bar w\in \cV^u$.

Since $\overline \bH_s^* h=\bar h\circ \overline\bH_s\, J\overline\bH_s \, d\xi_1\wedge\cdots \wedge d\xi_{d_s}$,  $J\overline\bH_s$ is the Jacobian of $\overline\bH_s$, we have
\[
\begin{split}
\int_{\widetilde W}\langle \vf, h\rangle&=\int_{\widetilde W_+'}\langle \vf, h\rangle\circ \bH_1\cdot J\bH_1
=\int_{\widetilde W_+'}\bar \vf\circ \bH_1\bar h\circ\overline \bH_1\, J\overline\bH_1  \\
&=\int_{\widetilde W_+'}\int_0^1 \bar \vf\circ \bH_1\frac{d}{ds}\left(\bar h\circ\overline \bH_s\, J\overline\bH_s\right) ds+\int_{\widetilde W_+'}\bar \vf\circ \bH_1\bar h. 
\end{split}
\]
Since 
\[
\begin{split}
\frac{d}{ds}\left(\bar h\circ\overline \bH_s\, J\overline\bH_s\right) &=\frac{d}{ds}\langle d\xi_1\wedge\cdots \wedge d\xi_{d_s}, \overline \bH_s^* h\rangle=\langle d\xi_1\wedge\cdots \wedge d\xi_{d_s},\bH_s^* L_{\bar w}h\rangle\\
&=\langle d\xi_1\wedge\cdots \wedge d\xi_{d_s}, L_{\bar w}h\rangle\circ \overline\bH_s J\overline \bH_s,
\end{split}
\]
it is convenient to define, 
\begin{equation}\label{eq:barvf_def}
\begin{split}
&\hat\vf=\bar \vf\circ \bH_1(\xi,\tau)d\xi_1\wedge\cdots\wedge d\xi_{d_s}\\
&\psi_s=\bar\vf\circ \bH_1\circ \overline\bH_s^{-1}d\xi_1\wedge\cdots\wedge d\xi_{d_s},
\end{split}
\end{equation}
which, setting $\widetilde W_s=\{(\xi, s\be, \tau)\}_{(\xi,\tau)\in\bR^{d_s+1}}$, allows to write
\[
\begin{split}
\int_{\widetilde W}\langle \vf, h\rangle&=\int_{\widetilde W_+'}\int_0^1 \langle \psi_s, L_{\bar w}h\rangle\circ \overline\bH_s\, J\overline \bH_s ds+\int_{\widetilde W_+'} \langle \hat\vf, h\rangle \\
&=\int_{\widetilde W_+'}\int_0^1 \langle \psi_s, L_{\bar w}h\rangle\circ \bH_s\, J\bH_s ds+\int_{\widetilde W_+'} \langle \hat\vf, h\rangle \\
&=d(W,W')\int_0^1 ds\int_{\widetilde W_s}\langle \psi_s,   L_{\hat w}h\rangle+\int_{\widetilde W_+'} \langle \hat\vf, h\rangle .
\end{split}
\]
From the above equation the Lemma follows, since $\|\psi_s\|_{\Gamma_c^{d_s,0}(\widetilde W_s)}\leq \Const\|\vf\|_{\Gamma^{d_s,0}_c(\widetilde W)}$ and $\|\hat\vf\|_{\Gamma^{d_s,q\varpi'}_c(\widetilde W')}\leq \Const\|\vf\|_{\Gamma^{d_s,q}_c(\widetilde W)}$. The extra $r$ comes from the size of the support of $\vf$, and hence of $\psi_s$, in the flow direction.
\end{proof}

Next, following verbatim \cite{GLP}, and using Lemma \ref{sublem:holo} (with $q=1$), we obtain the equivalent of \cite[Equation (7.13)]{GLP}: for each $g\in \Gamma^{d_s,1+\eta}_c(W_{\alpha,G})$,
\begin{equation}\label{eq:dolgo-step2}
\begin{split}
\int_{W_{\alpha,G}} \langle g, {\widehat R}_n(z)^2 h\rangle =&\sum_{k, \beta, \bi}\sum_{W\in \cW_{k, \beta, \bi}}\int_{\widetilde W_{\beta, \bi}}\langle \hat g_{k, \beta, \bi, W}, {\widehat R}_n(z) h
\rangle\\
&+\sum_k\cO\left(\frac{ r^2 (kr)^{n-1}\pqnorm[]{\widehat R_n(z){h}}^u \|g\|_{\Gamma^{d_s,1+\eta}_c(W_{\alpha,G})}}{(n-1)!e^{(a-\sigma_{d_s})kr}}\right),
\end{split}
\end{equation}
where $\hat g_{k, \beta, \bi, W}$ is given by \eqref{eq:barvf_def}, with $\bH_1= \bH_{\widetilde W_{\beta,\bi},\widetilde W}$ being the holonomy between $\widetilde W_{\beta,\bi}$ and $\widetilde W$. Note that equation \eqref{eq:barvf_def} was implicitly used (but missing) in \cite{GLP}. 

Next, we slightly depart from \cite{GLP} insofar as we simplify immediately the expression of $\hat g_{k, \beta, \bi, W}$ instead of doing it during the proof of \cite[Lemma 7.10]{GLP}.

By \eqref{eq:dolgo-step1} we can write
\[
\hat g_{k, \beta, \bi}=\vf_{k, \beta, \bi}e^{-z\tau_W}\check g_{k, \beta, \bi}.
\]
Hence, setting $\vf_{k, \beta, \bi, W}=\vf_{k, \beta, \bi}\circ  \bH_{\widetilde W_{\beta,\bi},\widetilde W}$, by the first line of \eqref{eq:barvf_def} we have
\[
\hat g_{k, \beta, \bi, W}=\vf_{k, \beta, \bi, W}e^{-z\tau_W\circ \bH_{\widetilde W_{\beta,\bi},\widetilde W}}\check g_{k, \beta, \bi, W},
\]
where, recalling \cite[Equation (7.12)]{GLP},
\begin{equation}\label{eq:g1}
\begin{split}
&\|\check g_{k, \beta, \bi, W}\|_{\Gamma^{\varpi'}(\widetilde W_{\beta,\bi})}\leq \Const \frac{(k r)^{n-1}e^{-a k r}}{(n-1)!}\|g\|_{\Gamma^{d_s,1+\eta}_c(W)}\\
&\|\vf_{k, \beta, \bi, W}\|_{\Gamma^{\varpi'}(\widetilde W_{\beta,\bi})}\leq \Const r^{-1}.
\end{split}
\end{equation}
Hence, setting $\bc_{k, \beta, \bi, W}=\check g_{k, \beta, \bi, W}(x^{\bi})$, 
\begin{equation}\label{eq:g2}
\|\check g_{k, \beta, \bi, W}-\bc_{k, \beta, \bi, W}\|_{\Gamma^{0}_c(\widetilde W_{\beta,\bi})}\leq \Const r^{\varpi'} \frac{(k r)^{n-1}e^{-a k r}}{(n-1)!}\|g\|_{\Gamma^{d_s,1+\eta}_c(W)}.
\end{equation}

Next, setting $\Delta^*_W(\xi)=\tau_W\circ \bH_{\widetilde W_{\beta,\bi},\widetilde W}(\xi)-\xi_{2d_s+1}$ and $w_W(\xi)=\bH_{\widetilde W_{\beta,\bi},\widetilde W}(\xi)-\xi$,\footnote{ Here, again, we are computing using some appropriate coordinates.} we have that \cite[Equations (7.27) and (7.30) ]{GLP} implies, for all $\zeta=(\tilde \zeta,0)$ with $\|\tilde \zeta\|\leq r$,
\begin{equation}\label{eq:holder-temp}
\begin{split}
\left\| \Delta^*_W(\xi+\zeta)-\Delta^*_W(\xi)-d\alpha_0(w_W(x^{\bi}),\zeta)\right\|&\leq \Const r^{2-\varpi'}\|w_W(x^{\bi})\|^{\varpi'}\|\zeta\|^{\varpi'} \\
&\leq \Const r^{2+\varpi'}.
\end{split}
\end{equation}
We impose, for $\varsigma$ small enough,
\begin{equation}\label{eq:b-bound1}
|b|\leq r^{-2-\varpi'+\varsigma}\,,
\end{equation}
and define
\begin{equation}\label{eq:gclean}
\begin{split}
\mathfrak g_{k, \beta, \bi}&= \sum_{W\in \cW_{k, \beta, \bi}}\hat g_{k, \beta, \bi, W}\\
\bg_{k, \beta, \bi, W}(\xi)&=\vf_{k, \beta, \bi, W}(\xi)e^{-z d\alpha_0(w_W(x^{\bi}),\xi-x^\bi)}e^{-z(\Delta^*_W(x^\bi)+\xi_{2d_s+1})}\bc_{k, \beta, \bi, W} \\
&\doteq\vf_{k, \beta, \bi, W}(\xi)e^{-z d\alpha_0(w_W(x^{\bi}),\xi-x^\bi)}\hat\bc_{k, \beta, \bi, W}(\xi_{2d_s+1})\\
\mathfrak g^*_{k, \beta, \bi}&\doteq \sum_{W\in \cW_{k, \beta, \bi}}\bg_{k, \beta, \bi, W}\,.
\end{split}
\end{equation}
Letting  $D_{k,\beta,\bi} = \frac{(k r)^{n-1}e^{-a rk}}{(n-1)!}\#\cW_{k, \beta, \bi}$ and recalling \eqref{eq:holder-temp}, \eqref{eq:b-bound1} and \cite[Equation (7.30)]{GLP}, we can write, for $\varsigma\leq \varpi'$,
\begin{equation}\label{eq:celandiff}
\begin{split}
&\left\|\mathfrak g_{k, \beta, \bi}-\mathfrak g^*_{k, \beta, \bi}\right\|_{\Gamma_c^{0}(\widetilde W_{\beta,\bi})}\leq \Const r^{\varsigma} D_{k,\beta,\bi}\|g\|_{\Gamma^{d_s,1+\eta}} \\
&\left\|\mathfrak g_{k, \beta, \bi}\right\|_{\Gamma_c^{\varpi'}(\widetilde W_{\beta,\bi})}+\left\|\mathfrak g^*_{k, \beta, \bi}\right\|_{\Gamma_c^{\varpi'}(\widetilde W_{\beta,\bi})}
\leq \Const \left\{ \frac 1r+\frac{|b|}{r^{-2+\varpi'}}\right\} D_{k,\beta,\bi}
\|g\|_{\Gamma^{d_s,1+\eta}}.
\end{split}
\end{equation}
Note that, by the definition of $\widehat R(z)$ in \cite[Section 7.1]{GLP}, \cite[Equation (4.11)]{GLP}, \cite[Equation (4.17)]{GLP} and the related notation, for all $n\geq c_\star \ln r^{-1}$, with $c_\star$ large enough,
\begin{equation}\label{eq:clean-final}
\begin{split}
\sum_{k, \beta, \bi}&\left|\int_{\widetilde W_{\beta, \bi}}\langle[ \frkg_{k, \beta, \bi}- \frkg^*_{k, \beta, \bi}], {\widehat R}_n(z) h\rangle\right|
=\Bigg|\int_{c_a n}^\infty dt e^{-zt} \frac{t^{n-1}}{(n-1)!}\\
&\times \sum_{k, \beta, \bi}\sum_{\substack{\beta'\in\cA\\k'\in\widetilde K_\beta}}\int_{\widetilde W_{\beta', G_k'}}J_{W_{\beta', G_k'}}\phi_t\langle *\phi_t^**[ \frkg_{k, \beta, \bi}- \frkg^*_{k, \beta, \bi}], h\rangle\Bigg|\\
&\leq \Const\sum_{k, \beta, \bi}\int_{c_a n}^\infty dt e^{-at} \frac{t^{n-1}}{(n-1)!}\sum_{k, \beta, \bi}\sum_{\substack{\beta'\in\cA\\k'\in\widetilde K_\beta}} r^{1+\varsigma} D_{k,\beta,\bi}\|g\|_{\Gamma_c^{d_s,1+\eta}}\|h\|_{\eta}^s\\
&\leq \Const\int_{c_a n}^\infty\!\!\!\!  dt \int_{c_a n}^\infty \!\!\!\! ds\; e^{(\htop-a)(t+s)} \frac{t^{n-1}}{(n-1)!}\frac{s^{n-1}}{(n-1)!}r^{\varsigma}\|g\|_{\Gamma_c^{d_s,1+\eta}}\|h\|_{\eta}^s\\
&\leq \Const (a-\htop)^{-2n}r^{\varsigma}\|g\|_{\Gamma_c^{d_s,1+\eta}}\|h\|_{\eta}^s.
\end{split}
\end{equation}

Hence, by \eqref{eq:dolgo-step2},  \eqref{eq:clean-final} and \cite[Equation (7.6)]{GLP}, we can write
\begin{equation}\label{eq:dolgo-step3}
\begin{split}
\int_{W_{\alpha,G}} \langle g, {\widehat R}_n(z)^2 h\rangle =&\sum_{k, \beta, \bi}\int_{\widetilde W_{\beta, \bi}}\langle \frkg^*_{k, \beta, \bi}, {\widehat R}_n(z) h
\rangle+\cO\left(\frac{r^{\varsigma}\|g\|_{\Gamma_c^{d_s,1+\eta}}}{(a-\htop)^{2n}}\|h\|_{\eta}^s\right)\\
&+\cO\left(\frac{\pqnorm[\eta]{h}^u \|g\|_{\Gamma^{d_s,1+\eta}_c}}{(a-\htop+\bar\lambda)^{n}(a-\htop)^n}\right).
\end{split}
\end{equation}

To estimate the integral on the right hand side of \eqref{eq:dolgo-step3} we define, similarly to \cite{GLP}:
\[
\begin{split}
&\mathfrak G_{k, \beta, \bi, A}^*\doteq \sum_{W\in \cW_{k, \beta, \bi}}\sum_{W'\in A_{k, \beta, \bi}(W)}\langle\bg_{k, \beta, \bi, W}, \bg_{k, \beta, \bi, W'}\rangle\\
&\mathfrak G_{k, \beta, \bi, B}^*\doteq \sum_{W\in \cW_{k, \beta, \bi}}\sum_{W'\in B_{k, \beta, \bi}(W)}\langle\bg_{k, \beta, \bi, W},\bg_{k, \beta, \bi, W'}\rangle.
\end{split}
\] 

To conclude, we need Lemmata \ref{lem:dolgo79} and \ref{sublem:doest-3} which are refinements of \cite[Lemma 7.9]{GLP} and \cite[Lemma 7.10]{GLP}, respectively. The proof of Lemma \ref{sublem:doest-3} follows closely \cite[Lemma 7.10]{GLP}, but it applies the same logic to different objects. Conversely, Lemma \ref{lem:dolgo79} differs from  \cite[Lemma 7.9]{GLP} as we take advantage of our new homogeneity hypothesis \eqref{eq:homo}.
\begin{lem} \label{lem:dolgo79}
If   $c_a\geq n_0$ and $C_\# |b|^{-\frac{\bar\lambda\Cnd\Cnc}{2ea_0} }\leq \varrho \leq \const r^{\frac{1+\varsigma/d_s}{1-\vartheta} }$, for some $\varsigma>0$, and $\vartheta\in(0, 1)$, we have
\begin{equation}\label{eq:doest-2}
\begin{split}
\|\mathfrak G_{k, \beta, \bi, A}\|_\infty\leq& C_\# D_{k,\beta,\bi}^2 r^{\varsigma}\|g\|^2_{\Gamma^{d_s,1+\eta}_c(W)}.
\end{split}
\end{equation}
\end{lem}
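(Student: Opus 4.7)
The plan is to bound $\|\mathfrak G^*_{k,\beta,\bi,A}\|_\infty$ pointwise, combining the $L^\infty$ estimates obtainable from \eqref{eq:g1} with a combinatorial count of how many aligned pairs $(W,W')$ can contribute simultaneously at any fixed $\xi \in \widetilde W_{\beta,\bi}$. The novelty relative to \cite[Lemma 7.9]{GLP} is that the homogeneity assumption \eqref{eq:homo} refines the count by exactly the factor that is absorbed by the upper bound on $\varrho$ hypothesised in the lemma.

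From \eqref{eq:g1} and the definition \eqref{eq:gclean} I would extract the pointwise bound $|\bg_{k,\beta,\bi,W}(\xi)| \leq C_\# r^{-1}\frac{(kr)^{n-1}e^{-akr}}{(n-1)!}\|g\|_{\Gamma^{d_s,1+\eta}_c(W)}$, since the exponential phases have modulus bounded by a constant on supports. Squaring and rewriting via $D_{k,\beta,\bi} = \frac{(kr)^{n-1}e^{-akr}}{(n-1)!}\#\cW_{k,\beta,\bi}$ yields $|\bg_W(\xi)\bg_{W'}(\xi)| \leq C_\# r^{-2}(D_{k,\beta,\bi}/\#\cW_{k,\beta,\bi})^2\|g\|^2$. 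Summing over aligned pairs,
\[
|\mathfrak G^*_{k,\beta,\bi,A}(\xi)| \leq C_\# r^{-2}\, N(\xi)\, (D_{k,\beta,\bi}/\#\cW_{k,\beta,\bi})^2\|g\|^2,
\]
where $N(\xi) = \#\{(W,W') : W'\in A_{k,\beta,\bi}(W),\ \bg_W(\xi)\bg_{W'}(\xi) \neq 0\}$. Thus the problem reduces to the combinatorial claim $N(\xi) \leq C_\# (\#\cW_{k,\beta,\bi})^2 r^{2+\varsigma}$.

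The main obstacle is this combinatorial bound, and it is where \eqref{eq:homo} enters. Both the local density of $\cW_{k,\beta,\bi}$ at $\xi$ and the number of $W' \in \cW_{k,\beta,\bi}$ within distance $\varrho$ of a given $W$ are controlled, up to multiplicative constants, by the stable Jacobian $J^s\phi_{kr}$. Without \eqref{eq:homo} these quantities can fluctuate by a factor up to $e^{(\lambda_+-\lambda_-)d_s kr}$ relative to their averages, whereas \eqref{eq:homo} caps the fluctuation at $e^{\vartheta \lambda_-(kr) d_s}$. In the regime $n\geq c_\star \ln r^{-1}$ entering \eqref{eq:clean-final}, this amounts to a polynomial correction of order $r^{-c\vartheta d_s}$ for an absolute constant $c$. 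Re-running the geometric counting of \cite[Lemma 7.9]{GLP} with this sharper density control produces, schematically,
\[
N(\xi) \leq C_\# (\#\cW_{k,\beta,\bi})^2\, \varrho^{d_s(1-\vartheta)}\, r^{2-d_s(1-\vartheta)},
\]
the exponent $(1-\vartheta)$ encoding exactly the gain from homogeneity.

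To conclude, the hypothesis $\varrho \leq \const r^{(1+\varsigma/d_s)/(1-\vartheta)}$ is equivalent to $\varrho^{d_s(1-\vartheta)}\leq \const r^{d_s+\varsigma}$, which substituted into the last display gives $N(\xi)\leq C_\#(\#\cW_{k,\beta,\bi})^2 r^{2+\varsigma}$; combined with the pointwise estimate above this yields the statement of the lemma. The lower bound $\varrho \geq C_\# |b|^{-\bar\lambda \Cnd\Cnc/(2ea_0)}$ plays no role in this $A$-side argument; it will be invoked only in the companion oscillatory estimate on $\mathfrak G^*_B$, where the stationary-phase cancellation driven by the factor $e^{-z d\alpha_0(w_W(x^{\bi}),\xi-x^\bi)}$ in \eqref{eq:gclean} requires $\varrho$ to be not too small compared to the inverse of the frequency $|b|$.
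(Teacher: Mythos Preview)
Your reduction to a counting problem is the right first move, and the final algebra with the hypothesis $\varrho^{d_s(1-\vartheta)}\leq c_\# r^{d_s+\varsigma}$ is correct. But the core step --- the derivation of the combinatorial bound
\[
N(\xi)\;\leq\; C_\# (\#\cW_{k,\beta,\bi})^2\,\varrho^{d_s(1-\vartheta)}\,r^{2-d_s(1-\vartheta)}
\]
--- has a genuine gap. You apply the homogeneity estimate \eqref{eq:homo} at time $kr$ and then assert that the resulting fluctuation $e^{\vartheta\lambda_-(kr)d_s}$ is ``a polynomial correction of order $r^{-c\vartheta d_s}$''. This is false: $k$ is an unbounded summation index, $\lambda_-(kr)\sim \lambda_- kr$, and nothing in the lemma ties $kr$ to $\ln r^{-1}$ (your invocation of $n\geq c_\star\ln r^{-1}$ from \eqref{eq:clean-final} conflates the resolvent power $n$ with the time index $k$). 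So the fluctuation bound you write down can be exponentially large in $k$, and your ``schematic'' formula is not justified by the reasoning you give.

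The paper's proof avoids precisely this trap by a two--stage argument. First, it chooses $t_0$ so that $e^{\lambda_-(t_0)}\varrho=1$, i.e.\ the time at which a $\varrho$--disc in the unstable direction has just grown to unit size; at \emph{this} time the homogeneity bound reads $e^{\vartheta\lambda_-(t_0)d_s}=\varrho^{-\vartheta d_s}$, which is exactly the polynomial--in--$\varrho$ factor you want and is what produces the exponent $(1-\vartheta)$ in your target formula. Second --- and this is the step missing entirely from your sketch --- for times $t\geq t_0$ one cannot keep invoking \eqref{eq:homo}; instead the paper covers $\phi_{t_0}(D^u_\varrho)$ and $\phi_{t_0}(D^u_r)$ by unit discs and appeals to \cite[Lemmata~C.1, C.3]{GLP}, which say that unit--size unstable discs all grow at the common rate $\htop$. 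This is what propagates the ratio obtained at $t_0$ to arbitrary later times without further loss. Without this second step, or an equivalent, your argument does not close.
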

\begin{proof}
The lower bound and the fact that the upper bound is bounded by the ratio between the volume of $\phi_{t}(D^u_{r}(W))$ and $\phi_{t}(D^u_{\varrho}(W))$ is proven exactly as in \cite[Lemma 7.9]{GLP}. The novelty here consists in a different estimate of such a ratio.\\
Let $t_0\in\bN$ be such that $e^{\lambda_-(t_0)}\varrho=1$. Then, for  each $x\in D^u_{\varrho}(W)$, let  $B(x)$ be an unstable disc of diameter $1$ and centred at $\phi_{t_0}(x)$, clearly $B(x)\subset \phi_{t_0}(D^u_{2\varrho}(W))$. Thus we can cover $\phi_{t_0}(D^u_{\varrho}(W))$ with $N_\varrho=\Const |\phi_{t_0}(D^u_{\varrho}(W)|$ discs. On the other hand, arguing analogously, we can find $N_r=\Const |\phi_{t_0}(D^u_{r}(W)|$ disjoint unstable discs of diameter $1$ contained in $\phi_{t_0}(D^u_{r}(W)$. 

By \eqref{eq:homo} and since the flow is contact, setting  $J_-^u:=\inf_{x\in M}J^u\phi_{t_0}(x)$, we have 
\[
\begin{split}
\frac{N_r}{N_\varrho}\geq\const \frac{|\phi_{t_0}(D^u_{r}(W)|}{|\phi_{t_0}(D^u_{\varrho}(W)|}\geq \const \frac{J_-^u r^{d_s}}{J_-^u e^{\vartheta\lambda_- (t_0) d_s}\varrho^{d_s}}=\const \left(r\varrho^{\vartheta-1}\right)^{d_s}\geq \Const r^{-\varsigma}.
\end{split}
\]
On the other hand by \cite[Lemmata C.1, C.3]{GLP} we have that all the discs of radius one grow under the dynamics at the same rate (given by the topological entropy), hence for all $t\geq t_0$, we have the required estimate
\[
\frac{|\phi_{t}(D^u_{r}(W)|}{|\phi_{t}(D^u_{\varrho}(W)|}\geq  \Const r^{-\varsigma}.
\]
\end{proof}

\begin{lem}\label{sublem:doest-3}
For $|b|\leq r^{-2-\varpi'+\varsigma}$ we have
\begin{equation}\label{eq:doest-3}
\left|\int _{\widetilde W_{\beta, \bi}} \mathfrak G_{k, \beta, \bi, B}^*\right|\leq C_\# |b|^{-{\varpi'}} \varrho^{-{\varpi'}}r^{d_s}D_{k,\beta,\bi}^2\|g\|^2_{\Gamma^{d_s,1+\eta}_c(W)}.
\end{equation}
\end{lem}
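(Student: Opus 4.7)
The plan is to follow the strategy of \cite[Lemma 7.10]{GLP}, but applied to the cleaner amplitudes $\bg_{k,\beta,\bi,W}$ defined in \eqref{eq:gclean}, where the $W$-dependent part of the exponent has already been linearised. Expanding the double sum and fixing one pair $(W,W')\in B_{k,\beta,\bi}(W)$, I must estimate
\[
I_{W,W'}=\int_{\widetilde W_{\beta,\bi}} \vf_{k,\beta,\bi,W}\,\vf_{k,\beta,\bi,W'}\,\hat\bc_{k,\beta,\bi,W}\,\overline{\hat\bc_{k,\beta,\bi,W'}}\, e^{-z\,d\alpha_0(w_W(x^\bi)-w_{W'}(x^\bi),\,\xi-x^\bi)}\,d\mathrm{vol}(\xi).
\]
Writing $z=a+ib$, the modulus of the exponential factor is $O(1)$ because $\|\xi-x^\bi\|\leq \Const r$ and $\|w_W\|,\|w_{W'}\|\leq \Const$; one absorbs this real part into the amplitude and focuses on the oscillatory factor $e^{-ib\,d\alpha_0(w_W-w_{W'},\,\xi-x^\bi)}$.

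Next, I invoke the contact property: since the $w_W$'s take values in the unstable distribution (cf.\ the definition of $\bH_{\widetilde W_{\beta,\bi},\widetilde W}$) and $d\alpha_0$ vanishes on the flow direction while restricting to a non-degenerate symplectic form pairing stable with unstable, the linear functional $\zeta\mapsto d\alpha_0(w_W-w_{W'},\zeta)$ is non-degenerate on the $d_s$-dimensional stable tangent space with operator norm comparable to $\|w_W-w_{W'}\|$. Hence one can pick a stable direction $\partial_{\xi^s_{j_0}}$ along which
\[
\bigl|\partial_{\xi^s_{j_0}}\,d\alpha_0(w_W-w_{W'},\,\xi-x^\bi)\bigr|\geq \const\|w_W-w_{W'}\|.
\]
By the very definition of $B_{k,\beta,\bi}(W)$ (inherited unchanged from \cite[Lemma 7.9]{GLP}), $\|w_W-w_{W'}\|\geq \const\varrho$, so the phase gradient in direction $j_0$ has modulus $\geq \const |b|\varrho$.

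The third step is the Hölder-class non-stationary phase estimate: a standard smoothing/integration-by-parts argument gives, for $f\in C^{\varpi'}$ supported in an interval of length $\lesssim r$,
\[
\left|\int e^{i\lambda y}f(y)\,dy\right|\leq \Const \lambda^{-\varpi'}\|f\|_{C^{\varpi'}}\,r.
\]
Applying this in the $\xi^s_{j_0}$ direction with $\lambda=\const |b|\varrho$, and using the support size $\lesssim r^{d_s}$ in the remaining $d_s$ directions (namely the other $d_s-1$ stable directions plus the flow direction, the latter contributing $r$ from the cutoff $p(r^{-1}\tau_W)$), together with the $\Gamma^{\varpi'}$-norm bounds \eqref{eq:g1} on $\vf_{k,\beta,\bi,W}$ and $\check g_{k,\beta,\bi,W}$, yields the per-pair bound
\[
|I_{W,W'}|\leq \Const (|b|\varrho)^{-\varpi'}\,r^{d_s}\,\biggl(\frac{(kr)^{n-1}e^{-akr}}{(n-1)!}\biggr)^{\!2}\|g\|^2_{\Gamma^{d_s,1+\eta}_c(W)}.
\]
Summing over the at most $(\#\cW_{k,\beta,\bi})^2$ admissible pairs then converts the square of $\tfrac{(kr)^{n-1}e^{-akr}}{(n-1)!}\#\cW_{k,\beta,\bi}$ into $D_{k,\beta,\bi}^2$, giving \eqref{eq:doest-3}.

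The delicate point (and the only place the hypothesis $|b|\leq r^{-2-\varpi'+\varsigma}$ enters) is justifying that linearising the phase $\Delta^*_W$ to $d\alpha_0(w_W(x^\bi),\xi-x^\bi)$ does not destroy the $(|b|\varrho)^{-\varpi'}$ decay: by \eqref{eq:holder-temp} and \eqref{eq:b-bound1}, the associated error in the phase is $O(|b|r^{2+\varpi'})=O(r^\varsigma)$, i.e.\ uniformly bounded, so multiplying by $e^{i(\text{error})}$ only redistributes mass within the $C^{\varpi'}$ norm of the amplitude and does not spoil the non-stationary phase bound. The rest is careful bookkeeping of powers of $r$, following \cite[Lemma 7.10]{GLP} verbatim.
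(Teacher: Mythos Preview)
Your argument is essentially the paper's: fix a pair $(W,W')$, use the contact structure to single out a stable direction along which the linear phase $d\alpha_0(w_W-w_{W'},\,\cdot)$ has gradient $\geq \const\,|b|\varrho$, and apply a $C^{\varpi'}$ oscillatory-integral bound there while integrating trivially over the remaining $d_s$ directions. The paper makes the same choice explicit via $y_{W,W'}=(-\eta^u_{W,W',\bi}/\|\eta^u_{W,W',\bi}\|,0,0)$ and the transverse slice $\Sigma_W$, and records the amplitude bound as $\|A(\xi,\cdot,\tau)\|_{C^{\varpi'}}\leq \Const r^{-1}$; your bookkeeping gives the same per-pair estimate.

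One correction to your last paragraph: the hypothesis $|b|\leq r^{-2-\varpi'+\varsigma}$ plays no role \emph{in this lemma}. The amplitudes $\bg_{k,\beta,\bi,W}$ in \eqref{eq:gclean} already carry the linearised phase $d\alpha_0(w_W(x^\bi),\xi-x^\bi)$, so there is no linearisation error left to control here; that error was absorbed earlier, in \eqref{eq:celandiff}--\eqref{eq:clean-final}, when passing from $\frkg_{k,\beta,\bi}$ to $\frkg^*_{k,\beta,\bi}$. The paper's own proof of this lemma makes no use of the upper bound on $|b|$ (it is stated only because this is the regime in which the lemma is applied).
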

\begin{proof}
For future convenience let us set $(\eta^s_{W,W',\bi},\eta^u_{W,W',\bi},\eta^d_{W,W',\bi})=\eta_{W,W',\bi}=w_W(x^{\bi})-w_{W'}(x^{\bi})$ and $\eta^+_{W,W',\bi}=w_W(x^{\bi})+w_{W'}(x^{\bi})$. By assumption $\|\eta_{W,W',\bi}\|\geq \varrho$. Also, it is convenient to work in coordinates $(\xi,\eta,\tau)$, $\xi,\eta\in\bR^{d_s}$, in which $x_\bi=0$ and $W_{\beta,\bi}\subset \{(\xi,0)\;:\; \xi\in \bR^{d_s}\}$ and $d\alpha_0=\sum_{i=1}^{d_s} d\xi_i\wedge d\eta_i$. We must estimate
\begin{equation}\label{eq:doest-4}
\begin{split}
\int_{\widetilde W_{\beta, \bi}}\hskip-.2cm&\langle \bg_{k, \beta, \bi,W},\bg_{k, \beta, \bi,W'}\rangle
=\int_{\widetilde W_{\beta, \bi}}\vf_{k, \beta, \bi, W}(\xi)\vf_{k, \beta, \bi, W'}(\xi)\\
&\times \hat\bc_{k, \beta, \bi, W}(\tau)\overline{\hat\bc_{k, \beta, \bi, W'}(\tau)} e^{-ib d\alpha_0(\eta_{W,W',\bi},\xi-x^\bi)}e^{-a d\alpha_0(\eta^+_{W,W',\bi},\xi-x^\bi)}\, .
\end{split}
\end{equation}
As in \cite[Section 7.2]{GLP} we choose $y_{W,W'}=(-\eta^u_{W,W',\bi}\|\eta^u_{W,W',\bi}\|^{-1},0,0)$ which implies that $d\alpha_0(\eta_{W,W',\bi}, y_{W,W'})=\|\eta^u_{W,W',\bi}\|$, and $\langle y_{W,W'}, e_{2d_s+1}\rangle=0$. Also, let $\Sigma_W=\{(\xi, \tau)\in\bR^{d_s+1}\;|\:\;\langle \xi, \eta^u_{W,W',\bi}\rangle=0\}$ and 
\[
\begin{split}
A(\xi,s,\tau)=&\vf_{k, \beta, \bi, W}((\xi,0,\tau )+sy_{W,W'})\vf_{k, \beta, \bi, W'}((\xi, 0,\tau)+sy_{W,W'})\\
&\times e^{-a d\alpha_0(\eta^+_{W,W',\bi},(\xi,s,0,\tau)+sy_{W,W'})}.
\end{split}
\]
Then, we can write
\[
\int_{\widetilde W_{\beta, \bi}}\hskip-.2cm\langle \bg_{k, \beta, \bi,W},\bg_{k, \beta, \bi,W'}\rangle
=\int_{\Sigma_W}\!\!\!\! \!\!\! d\tau d\xi\, \hat\bc_{k, \beta, \bi, W}(\tau)\overline{\hat\bc_{k, \beta, \bi, W'}(\tau)} \int_{-\const r}^{\const r} \hskip-12pt ds A(\xi, s,\tau)e^{-ib \|\eta^u_{W,W',\bi}\|s }.
\]
Note that, by \eqref{eq:g1}, $\|A(\xi, \cdot,\tau)\|_{\cC^{\varpi'}}\leq\Const  r^{-1}$, hence (as in \cite[Lemma 7.10]{GLP})
\[
\left| \int_{-\const r}^{\const r} \hskip-12pt ds A(\xi, s)e^{-ib \|\eta^u_{W,W',\bi}\|s }\right|\leq \Const |b|^{-{\varpi'}} \varrho^{-{\varpi'}}.
\]
\vskip-.8cm
\end{proof}
\vskip.2cm
Here our strategy departs from \cite{GLP} as we control directly  where $\frkg^*_{k,\beta,\bi}$ is large.

Let  $\Omega=\{x\in  W_{\beta,\bi}\;:\;  \|\frkg^*_{k,\beta,\bi}(x)\|\geq 4 C_\flat |r|^{{\varsigma}/2}D_{k,\beta,\bi}\}$ and $\Omega_1=\{x\in  W_{\beta,\bi}\;:\; \mathfrak G_{k, \beta, \bi, B}^*\geq C_\flat |r|^{{\varsigma}}D_{k,\beta,\bi}^2\}$, while $\widetilde \Omega$ and $\widetilde \Omega_1$ are $r$ thickenings in the flow direction.  Note that if $x\in\Omega$, then $\phi_t(x)\in\widetilde\Omega_1$ for all $t\leq \Const r$. By Lemma \ref{lem:dolgo79}, choosing  $C_\flat$ large, we have $\Omega\subset \Omega_1$. By Chebychev inequality, Lemma \ref{sublem:doest-3} implies
\[
\begin{split}
|\widetilde\Omega|&= \int_{\widetilde W_{\beta,\bi}}\Id_{\Omega_1}\leq \Const \int_{\widetilde W_{\beta,\bi}}
\mathfrak G_{k, \beta, \bi, B}^*|r|^{-{\varsigma}}D_{k,\beta,\bi}^{-2}
\leq \Const (|b| \varrho)^{-{\varpi'}}r^{d_s-{\varsigma}}.
\end{split}
\]
Thus 
\begin{equation}\label{eq:Omegaest}
|\Omega|\leq \Const |b|^{-{\varpi'}} \varrho^{-{\varpi'}}r^{d_s-1-{\varsigma}} .
\end{equation}

If $x\in\Omega$ then, by \eqref{eq:gclean}, \eqref{eq:celandiff}, we have that
$\|\frkg^*_{k,\beta,\bi}(y)\|\geq C_\flat |r|^{{\varsigma}/2}D_{k,\beta,\bi}$ provided 
\[
|y-x|^{\varpi'}r^{-1}+|y-x| |b| r\leq \Const r^{{\varsigma}/2}.
\]
The above holds for 
\begin{equation}\label{eq:Deltabound}
|y-x|\leq \Const \min\{ r^{\frac{1+\varsigma/2}{\varpi'}},|b|^{-1}r^{-1+\frac{{\varsigma}}2}\}=:\rho.
\end{equation}

We are finally ready to prove the stated  Theorem. 
\begin{proof}[\bf Proof of Theorem \ref{thm:main2}] 
Let $t_0>0$ be such that $e^{\lambda_- (t_0)}\rho=1$. 
Then, recalling \eqref{eq:homo}, 
\[
\begin{split}
&|\phi_{-t_0}(\Omega)|\leq e^{(J^s_-(t_0)+\lambda_- (t_0)d_s\vartheta)}|\Omega|\\
&|\phi_{-t_0}(W_{\beta,\bi})|\geq e^{J^s_-(t_0)} r^{d_s}.
\end{split}
\]
It follows that if we cover $\phi_{-t_0}(W_{\beta,\bi})$ by discs of radius $1$, then, recalling \eqref{eq:Omegaest}, for each disc that intersects $\phi_{-t_0}(\Omega)$ there are at least
\[
K=\frac{r^{d_s}}{ e^{\lambda_- (t_0)d_s\vartheta }|\Omega|}\geq \const \rho^{\vartheta d_s}b^{\varpi'}\varrho^{\varpi'}r^{1+\varsigma}
\]
discs that are disjoint from $\phi_{-t_0}(\Omega)$. Indeed, if a disc intersects $\phi_{-t_0}(\Omega)$, then a disc twice its radius must have a fixed proportion of its volume belonging to $\phi_{-t_0}(\Omega)$.\\
We chose $\varrho = \const r^{\frac{1+\varsigma/d_s}{1-\vartheta} }$ (so Lemma \ref{lem:dolgo79} applies), $|b|=r^{-2-\varpi'+\varsigma}$ (so Lemma \ref{sublem:doest-3} applies). Accordingly, $K$ can be larger than one only if $(\varpi')^2+\varpi'-1>0$, but then, choosing $\varsigma$ small enough, \eqref{eq:Deltabound} implies $\rho=\Const r^{1+\varpi'-\frac{{\varsigma}}2}$, which implies
\[
K\geq r^{-\varsigma}
\]
provided $\vartheta<\frac{(\varpi')^2+\varpi'-1}{2d_s(1+\varpi')}$ and $\varsigma$ is small enough. Again by \cite[Lemmata C.1, C.3]{GLP} this ratio persists under iteration. Hence, for each $t\geq t_0$,
\[
|\phi_{-t}(\Omega)|\leq \Const r^{\varsigma} |\phi_{-t}( W_{\beta,\bi})|.
\]
If $n\geq C_1\ln|b|$,  for $C_1$ and $b$ large enough, by \cite[Equation (7.12)]{GLP},  $c_a n\geq t_0$ and
\[
\|J_{W}\phi_{-t_0} *\phi_{-t_0}^** \mathfrak g^*_{k, \beta, \bi}\|_{\Gamma^{d_s,\varpi'}_c(W)}
\leq \Const\|  \mathfrak g^*_{k, \beta, \bi} \Id_{\phi_{t_0}(W)} \|_{\Gamma^{d_s,0}_c(W_{\beta,\bi})}.
\]
Hence, for each $k'\geq c_a n$, we can decompose $\phi_{-k'} W_{\beta,\bi}=\cup_{W\in \cW_{k'}} W$, $W\in\Sigma^s$ (see \cite[Definition 7.2]{GLP}). 
We write $\cW_{k'}=\cW_{k'}^+\cup \cW_{k'}^-$, where $W\in \cW_{k'}^-$ if $\phi_{k'}(W)\cap \Omega=\emptyset$, while if $W\in \cW_{k'}^+$, then $\phi_{k'}(W)\subset \Omega_1$. By the previous discussion, and \cite[Lemmata C.1, C.3]{GLP}, we have $\sharp \cW_{k'}^+\leq \Const r^{\varsigma}\sharp \cW_{k'}$.
Since,
\[
\left|\int_{\widetilde W_{\beta, \bi}}\langle \frkg^*_{k, \beta, \bi}, {\widehat R}_n(z) h \rangle\right|\leq \sum_{k'}\;\sum_{W\in\cW_{k'}}\left|\int_{\widetilde W}\langle {\boldsymbol \vf}_{k', W}, h\rangle\right|
\] 
with 
$\|  {\boldsymbol \vf}_{k', W}\|_{\Gamma^{d_s,\varpi'}_c(W)}\leq \Const \frac{(k'r)^{n-1}e^{a k'r}}{(n-1)!}\|  \frkg^*_{k, \beta, \bi}\Id_{\phi_{k'}(W)}\|_{\Gamma^{d_s,0}_c(W_{\beta,\bi})}$, it follows,
\[
\begin{split}
&\sum_{k,\beta,\bi}\left|\int_{\widetilde W_{\beta, \bi}}\!\!\!\!\langle \frkg^*_{k, \beta, \bi}, {\widehat R}_n(z) h \rangle\right|\leq \Const\!\sum_{k,\beta,\bi, k'} \frac{(k'r)^{n-1}\left[\sharp \cW_{k'}^-  |r|^{\varsigma/2}+\sharp \cW_{k'}^+\right]D_{k,\beta,\bi}}{e^{-a k'r}(n-1)!}\|h\|^*_{\varpi'}\\
&\leq \Const r^{\varsigma/2}\sum_{k, k'} \frac{(k r)^{n-1}e^{-a rk}}{(n-1)!} \frac{(k'r)^{n-1}|\phi_{k+k'}(W_{\alpha, G})}{e^{-a k'r}(n-1)!} \|h\|^*_{\varpi'}\\
&\leq \Const r^{\varsigma/2} (a-\htop)^{-2n}\|h\|^*_{\varpi'}.
\end{split}
\]
Using the above inequality in \eqref{eq:dolgo-step3} provides an estimate of $\|\widehat R_n(z)^2 h\|^s_{1+\eta}$ which, substituted into  \eqref{eq:reg-up0}, yields \cite[Proposition 7.5]{GLP}.\\
 Theorem \ref{thm:main2} follows then as in \cite[Theorem 2.4]{GLP}.
\end{proof}

\end{document}